\newtheorem{theorem}{Theorem}
\newtheorem{lemma}{Lemma}
\newtheorem{proposition}{Proposition}
\newtheorem{definition}{Definition}
\DeclareMathOperator{\Macp}{MacP}
\title{A counterexample to Las Vergnas' strong map conjecture on realizable oriented matroids}
\author{Pei Wu\\ email: {wupei5060309031@gmail.com} }
\begin{document}
	\maketitle
	
	\begin{abstract}
		The Las Vergnas' strong map conjecture, asserts that any strong map of oriented matroids $f:\mathcal{M}_1\rightarrow\mathcal{M}_2$ can be factored into extensions and contractions. This conjecture is known to be false due to a construction by Richter-Gebert, he finds a strong map which is not factorizable, however in his example $\mathcal{M}_1$ is not realizable. The problem that whether there exists a non-factorizable strong map between realizable oriented matroids still remains open. In this paper we provide a counterexample to the strong map conjecture on realizable oriented matroids, which is a strong map $f:\mathcal{M}_1\rightarrow\mathcal{M}_2$, $\mathcal{M}_1$ is an alternating oriented matroid of rank $4$ and $f$ has corank $2$. We prove it is not factorizable by showing that there is no uniform oriented matroid $\mathcal{M}^{\prime}$ of rank $3$ such that $\mathcal{M}_1\rightarrow\mathcal{M}^{\prime}\rightarrow\mathcal{M}_2$.
		%\keywords{Las Vergnas' strong map conjecture \and Strong map \and Oriented matroid \and Integral linear programming}
		%\PACS{PACS code1 \and PACS code2 \and more}
		%\subclass{52C40 \and 52B40 \and 55R40 }
	\end{abstract}
	
	\section{Background}
	The strong map conjecture, firstly posed by Las Vergnas\cite{allys1991minors}, asserts that any strong map of oriented matroids $f:\mathcal{M}_1\rightarrow\mathcal{M}_2$ can be factored into extensions and contractions. It is known that the conjecture holds for ordinary matroids \cite{higgs1968strong}. And for oriented matroids a counterexample has been constructed by Richter-Gebert \cite{Richter-Gebert1993}. However, $\mathcal{M}_1$ is not realizable in Richter-Gebert's construction. The problem that whether Las Vergnas' conjecture holds when $\mathcal{M}_1$ is realizable still remains open. In this paper, we will present an counterexample disproving this conjecture.
	
	\begin{theorem}
		There is a strong map $f:\mathcal{M}_1\rightarrow\mathcal{M}_2$ with $\mathcal{M}_1$ being an realizable oriented matroid of rank $4$ on $8$ elements and $f$ corank $2$, which is not factorizable into extensions and contractions. 
	\end{theorem}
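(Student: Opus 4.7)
The plan is to proceed in three stages: explicit construction, reduction to single-element liftings, and a finite non-existence argument.

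First, I would take $\mathcal{M}_1$ to be the alternating (cyclic) oriented matroid of rank $4$ on $[8]$, realized by $8$ points on the moment curve in $\mathbb{R}^4$; its chirotope is the alternating sign function, so all $\binom{8}{4}=70$ bases are oriented with predictable signs. Then $\mathcal{M}_2$ must have rank $2$. I would pick a specific rank-$2$ oriented matroid on $[8]$ obtained by projecting those $8$ moment-curve points onto a carefully chosen plane, and then verify the strong-map condition by the cocircuit containment $\mathcal{C}^{*}(\mathcal{M}_2)\subseteq \mathcal{C}^{*}(\mathcal{M}_1)$, equivalently that every signed circuit of $\mathcal{M}_1$ extends to a signed circuit of $\mathcal{M}_2$.

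Second, I would reduce factorizability to a problem about single-element liftings. Any factorization $\mathcal{M}_1\to\mathcal{M}'\to\mathcal{M}_2$ through a rank-$3$ matroid has both arrows of corank $1$. By the standard correspondence, a corank-$1$ strong map $\mathcal{N}\to\mathcal{N}'$ is the same data as a single-element extension $\mathcal{N}\cup\{e\}$ with $(\mathcal{N}\cup\{e\})/e=\mathcal{N}'$ (up to loops), which in turn is encoded by a Las~Vergnas localization on the cocircuits of $\mathcal{N}$. Thus a factorization through $\mathcal{M}'$ corresponds to a pair of localizations: one on $\mathcal{M}_1$ producing $\mathcal{M}'$, and one on $\mathcal{M}'$ producing $\mathcal{M}_2$. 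The uniformity assumption in the statement says we only need to rule out uniform intermediates $\mathcal{M}'$, which is exactly the generic case naturally produced by extending a uniform $\mathcal{M}_1$.

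Third, the heart of the proof is a finite non-existence argument. Localizations of $\mathcal{M}_1$ assign consistent signs to all cocircuits of $\mathcal{M}_1$ subject to cocircuit elimination, so they form a finite, combinatorially constrained set; by the $\mathbb{Z}/2$-reflection symmetry of the alternating matroid this set decomposes into a tractable number of orbits. For each candidate localization I would compute the resulting rank-$3$ contraction $\mathcal{M}'$ and read off which rank-$2$ images can be obtained by a further extension-then-contraction. I would then select a small family of four-element sign tests (typically $4$-subsets whose circuit signs in $\mathcal{M}_1$ and target signs in $\mathcal{M}_2$ are already rigidly related by the strong-map condition) and show that in every candidate $\mathcal{M}'$, two of these tests demand inconsistent signs on some common cocircuit, so no compatible second extension can exist.

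The main obstacle will be the last step: demonstrating, not merely asserting, that no uniform rank-$3$ intermediate works. Even after using the symmetries of $\mathcal{M}_1$ to collapse equivalent candidates, one is facing a genuinely combinatorial search over localizations, and the challenge is to isolate a concise, symmetry-invariant obstruction—a short list of "witness" circuits whose simultaneous sign requirements are unsatisfiable in any uniform rank-$3$ lift—rather than grind through all cases by hand. Finding such a conceptual witness, and verifying that it genuinely rules out every orbit of candidates, is where the real work lies.
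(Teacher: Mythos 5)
Your outline shares the same high-level skeleton as the paper (alternating rank-$4$ $\mathcal{M}_1$, rank-$2$ target, reduce to a uniform rank-$3$ intermediate, finish by a finite search), but two of your steps contain genuine problems.

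\textbf{The target $\mathcal{M}_2$ cannot be a projection of the moment curve.} You propose to take $\mathcal{M}_2$ to be the rank-$2$ oriented matroid ``obtained by projecting those $8$ moment-curve points onto a carefully chosen plane.'' A rank-$2$ image of a rank-$4$ point configuration obtained by a linear projection is, by construction, the contraction of a two-element extension: adjoin two vectors spanning the kernel of the projection, then contract them. That strong map is therefore factorizable, which is precisely the opposite of what the theorem needs. The whole difficulty of the construction is in choosing a rank-$2$ $\mathcal{M}_2$ that is a valid strong-map image of $\mathcal{M}_1$ yet is \emph{not} reachable by any projection of any realization. The paper does this by taking the chirotope $\chi(i,j) = 1 \iff \sigma(i)\ge\sigma(j)$ where $\sigma=(1\,2)(3\,4)\cdots(n{-}1\,n)$, which scrambles the linear order into a highly oscillatory one. (Also, as a minor point, your verification criterion should be covector/tope containment $\mathcal{L}(\mathcal{M}_2)\subseteq\mathcal{L}(\mathcal{M}_1)$, not cocircuit containment $\mathcal{C}^*(\mathcal{M}_2)\subseteq\mathcal{C}^*(\mathcal{M}_1)$: cocircuits of $\mathcal{M}_2$ are covectors of $\mathcal{M}_1$ under a strong map, but need not be cocircuits of $\mathcal{M}_1$ at all.)

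\textbf{The finite search needs a concrete handle, which your sketch does not supply.} Your third step correctly identifies that the crux is a finite non-existence argument, and your instinct to look for witness circuits on four-element subsets is on the right track. But ``compute every localization of $\mathcal{M}_1$, read off the possible rank-$2$ images, and find a small family of sign tests'' is a plan, not an argument: it does not say how the combinatorial explosion is tamed, nor what the witnesses are, nor why they are symmetry-invariant. The paper's key move, which your outline does not contain, is to first run the search entirely at $n=6$: one enumerates tope sets $\mathcal{T}(\mathcal{M}')$ with $\mathcal{T}(\mathcal{M}_2)\subset\mathcal{T}(\mathcal{M}')\subset\mathcal{T}(\mathcal{M}_1)$ of the right cardinality and with the VC-dimension property (Theorem~\ref{vc}), and observes that every surviving candidate has $(+-+-00)$ and $(+-00-+)$ as circuits (Lemma~\ref{6}). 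Then one passes to $n=8$ by restriction: restricting a putative $\mathcal{M}'$ on $[8]$ to $\{1,2,3,4,5,6\}$ forces the circuit on $\{1,2,5,6\}$ to be $(+-00-+00)$, while restricting to $\{1,2,5,6,7,8\}$ forces it to be $(+-00+-00)$, a contradiction. This ``solve the small case exhaustively, then propagate forced circuits upward via minors'' pattern is exactly the concrete, checkable obstruction your sketch is groping for, and without it (or some comparable mechanism) the third stage of your plan does not close.

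Your second step---reducing factorizability to the existence of a uniform rank-$3$ intermediate via localizations and perturbation---is correct and matches the paper.
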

	
	Las Vergnas' conjecture on realizable oriented matroids has its own significance as a part of the "combinatorial Grassmannian" program \cite{Mnev1993}. The program is stimulated by pioneering works of Gelfand and MacPherson \cite{Gelfand1992,macpherson1991combinatorial}, in \cite{Gelfand1992} they proposed a formula that calculates rational Pontrjagin classes of a differentiable manifold from combinatorial data. In their proof they make use of a modified formulation of Chern-Weil theory. So it is not possible to calculate any $\mathbb{Z}/p\mathbb{Z}$-characteristic classes following same argument. A possible way to remedy this deficit is to adopt the definition of characteristic classes via Grassmannians. Let's recall some standard facts of characteristic classes (see \cite{Bott1982} or \cite{Milnor2004} for a comprehensive treatment). Let $p:E\rightarrow B$ be a real vector bundle on a manifold $B$, a characteristic class of the bundle is an invariant taking value in cohomology ring $H^*(B)$ of certain coefficients. If $p$ is a $\mathbb{R}^k$-bundle, there is a canonical map (up to isotopy) from $B$ to the \emph{infinite real Grassmannian} $G_k^\infty$, coined \emph{Gauss map}, and characteristic classes are pull-backs of certain cohomology classes on infinite real Grassmannian. Such definition is purely topological, so one would expect that we are able to rewrite this definition using combinatorial data with less effort. MacPherson\cite{Mnev1993} suggests the following object as a substitute of $G_k(\mathbb{R}^n)$: the (chain complex of) poset of all oriented matroids of rank $k$ on $n$ elements, ordering with respect to weak maps, called \emph{MacPhersonian} and denoted as $\Macp(n,k)$. Let $\mathcal{F}^n$ be the free oriented matroid of rank $n$, $\Macp(n,k)$ is the poset of rank $k$ strong image of $\mathcal{F}^n$. One can obtain more general object by substituting $\mathcal{F}^n$ with an arbitrary rank $n$ oriented matroid $\mathcal{M}$ (one can assume $\mathcal{M}$ is realizable for our purpose), called \emph{OM-Grassmannian} and denoted as $\mathcal{G}_k(\mathcal{M})$. Combinatorial Grassmannian program is the study of homopoty type of $\mathcal{G}_k(\mathcal{M})$. The conjecture that $\mathcal{G}_k(\mathcal{M})$ and $G_k(\mathbb{R}^n)$ are homotopy equivalent has been disapproved by Gaku Liu \cite{Liu2016}. And whether $\Macp(n,k)$ and $G_k(\mathbb{R}^n)$ are homotopy equivalent still remains open.
	
	The Las Vergnas' strong map conjecture is related with combinatorial Grassmannian program in the following way: the (non-compact) \emph{Stiefel manifold} $V_k(\mathbb{R}^n)$ is the set of all $k$-tuples of linearly independent vectors, there is a surjective mapping $p:V_k(\mathbb{R}^n)\rightarrow G_k(\mathbb{R}^n)$ by sending the $k$-tuples to the linear space they span. For every $pt\in G_k(\mathbb{R}^n)$, $p^{-1}(\{pt\})$ is isomorphic to $GL(k,\mathbb{R})$, so $V_k(\mathbb{R}^n)$ can be viewed as a principal $GL(k,\mathbb{R})$-bundle over $G_k(\mathbb{R}^n)$. The oriented matroid counterpart of Stiefel manifold is defined as follows: let $\mathcal{M}$ be an oriented matroid of rank $n$, the \emph{OM-Stiefel space} $\mathcal{V}_k(\mathcal{M})$ is defined as all "non-degenerate" $n-k$ extensions, i.e. if the set of new elements is $S$, the contraction $\mathcal{M}/S$ should has rank $k$. So there is a poset mapping $\tilde{p}:\mathcal{V}_k(\mathcal{M})\rightarrow\mathcal{G}_k(\mathcal{M})$ defined by contracting $S$. A natural problem is, is preimage of every point is homotopic to $GL(k,\mathbb{R})$? Note that the Las Vergnas' strong map conjecture would implies the subjectivity of $\tilde{p}$. Our counterexample indicates that there is a point with empty preimage.
	
	\section{Oriented Matroids}
	For completeness we will include a brief introduction to the theory of oriented matroids, in which we try to cover most conventions and facts we use in this paper, one could refer to \cite{bjorner2000oriented} for a detailed treatment. 
	
	Datum of oriented matroid can be encoded by \emph{circuits, vectors, cocircuits, covectors, topes} or \emph{chirotope}. Let $E$ be the ground set. Circuits, vectors, cocircuits, covectors, topes are all signed vectors on $E$. A \emph{signed vector} $X$ is a mapping $X:E\to\{-1,0,1\}$. $X^{-1}(1)$ and $X^{-1}(-1)$ are denoted as $X^+$ and $X^-$, respectively. We will use two ways to write the signed vectors, for example when $E=\{1,\dots,5\}$, $X^{+}=\{1,3\}$ and $X^{-}=\{2,4\}$, $X=(+-+-0)$ or $X=1\bar23\bar4$. $\bm{0}$ is the signed vector $X$ with $X^+=X^-=\emptyset$, $\bm{1}$ is the signed vector $X$ with $X^+=X$. If $X$ is a signed vector, define $-X$ to be the signed vector with $(-X){(i)}=-X{(i)}$, which is called the \emph{opposite} of $X$. Given a set of signed vectors $\mathcal{X}$, \emph{reorientation} of an element $e\in E$ is the operation reversing values of $X(e)$ for all signed vectors $X\in\mathcal{X}$. The \emph{support} of a signed vector is defined as $X^+\cup X^-$, denoted as $\underline{X}$, the \emph{size} of $X$ is defined as the size of support, signed vector $X$ has \emph{full support} iff $\underline{X}=E$. Two signed vectors $Y$ and $Z$ are \emph{perpendicular} iff in their component-wise products $X$, $X^+$ and $X^-$ are all empty or all non-empty, written as $Y\perp Z$. There is a natural partial ordering on signed vectors: $X\preceq X^{\prime}$ iff $X^+\subseteq X^{\prime+}$ and $X^-\subseteq X^{\prime-}$. If $E^{\prime}\subseteq E$, \emph{restriction} of $X$ on $E^{\prime}$ is a signed vector on $E^{\prime}$, defined as $X|_{E^{\prime}}(i)=X(i)$ for $i\in E^{\prime}$. The \emph{chirotope} is an anti-symmetric mapping $\chi:E^r\rightarrow\{1,0,-1\}$, in which $r=r(\mathcal{M})$ is the \emph{rank} of the oriented matroid. An oriented matroid can be encoded by a set of circuits, or cocircuits, etc, satisfying certain sets of axioms (\cite{bjorner2000oriented} Chapter. 3). For completeness, we include the covector axiomatization of oriented matroids here:
	
	\begin{definition}
		An {oriented matroid} is a pair $\mathcal{M}=(E,\mathcal{L})$, covectors $\mathcal{L}$ is a set of signed vector on $E$ such that:
		\begin{enumerate}
			\item $\bm{0}\in \mathcal{L}$
			\item $X\in \mathcal{L}\implies -X\in \mathcal{L}$
			\item $X,Y\in \mathcal{L}\implies X\circ Y\in \mathcal{L}$
			\item (covector elimination) $X,Y\in \mathcal{L},\, e\in S(X,Y)\implies$ there exist $Z\in \mathcal{L}$ such that $Z{(e)}=0$ and $Z{(f)}=(X\circ Y){(f)}$ for $f\not\in S(X,Y)$.
		\end{enumerate} 
		
		In which $S(X,Y):=\{e\in E|X{(e)}=-Y{(e)}\not=0\}$ and $X\circ Y$ is the signed vector defined as 
		\[(X\circ Y){(e)}=\begin{cases}
		X{(e)}, & \text{if }X{(e)}\not=0 \\
		Y{(e)}, & \text{otherwise}
		\end{cases}\]
	\end{definition} 
	
	A finite set of points $E=\{\bf{v}_1,\dots,\bf{v}_n\}$ in affine space $\mathbb{R}^{r-1}$ is a \emph{point configuration} if their affine closure is $\mathbb{R}^{r-1}$, we can associate it with an oriented matroid $\mathcal{M}$. Each affine dependency $\sum \lambda_i {\bf{v}}_i={\bf 0}, \sum \lambda_i=0$ defines a vector $X$ of $\mathcal{M}$ by $X^+=\{{\bf{v}}_i|\lambda_i>0\}$ and $X^-=\{{\bf{v}}_i|\lambda_i<0\}$. Geometrically this implies the convex hull of $X^+$ and $X^-$ are intersecting at interior points.
	And each ${\bf{w}}\in \mathbb{R}^{r*},\, a\in \mathbb{R}$ defines a covector $X$ of $\mathcal{M}$ such that $X^+=\{{\bf{v}_i|\left\langle\bf{v}_i,\bf{w}\right\rangle}>a\}$, $X^-=\{{\bf{v}_i|\left\langle\bf{v}_i,\bf{w}\right\rangle}<a\}$. That is, $X^+$ and $X^-$ lie in two half-spaces cut by hyperplane $\{{\bf{v}_i|\left\langle\bf{v}_i,\bf{w}\right\rangle}=a\}$.  Circuits are non-zero $\preceq$-minimal vectors and cocircuits are non-zero $\preceq$-minimal covectors and topes are $\preceq$-maximal covectors. The chirotope is an alternating function on $E^r$, $\chi:E^r\to \{-1,0,1\}$, defined by $\chi(i_1,\dots,i_r)=\text{sign}(\det(\bf{v}_{i_1}-\bf{v}_{i_{r}},\dots,\bf{v}_{i_{r-1}}-\bf{v}_{i_{r}}))$ ($-\chi$ is considered to be same chirotope as $\chi$ ). An oriented matroid is \emph{realizable} iff it arises in this way for some $\{{\bf v}_e:e\in E\}$, up to reorientation of elements. One could verify that every vector is perpendicular to every covector, which is a property also holds for non-realizable oriented matroids. 
	
	An oriented matroid is acyclic iff $\bm{1}$ is a covector. An oriented matroids is \emph{uniform} iff $\chi(i_1,\dots,i_r)\not=0$ for all $i_1,\dots,i_r$ distinct. In an uniform oriented matroid the size of circuits are always $r+1$ and size of cocircuits are always $n-r$. Define $\Phi_r(n):=\sum_{i=0}^r\binom{n}{i}$, the number of topes is $2\Phi_{r-1}(n-1)$. Actually the converse is also true by \cite{Gartner1994}, which provides an alternative axiomatization of uniform oriented matroid, for which will be useful for enumerating oriented matroids. 
	
	\begin{theorem}\label{vc}
		Let $\mathcal{T}$ be a set of full support signed vectors on $[n]$, $\mathcal{T}$ is the set of topes of an rank $r$ uniform oriented matroid iff:
		\begin{enumerate}
			
			\item $\#T=2\Phi_{r-1}(n-1)$
			\item $X\in \mathcal{L}\implies -X\in \mathcal{L}$
			\item (VC-dimension) For any $Q\in\binom{[n]}{r+1}$, there exist a signed vector $c_Q$(together with its opposite) supported on $Q$ such that $T\perp c_Q$(or equivalently, $T|_Q\not= c_Q$) for every $T\in\mathcal{T}$. 
			
		\end{enumerate} 
	\end{theorem}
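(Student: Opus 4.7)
The plan is to handle both directions, with the forward one being essentially bookkeeping and the reverse one requiring genuine combinatorial work.

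For the forward direction, assume $\mathcal{M}$ is a uniform oriented matroid of rank $r$ on $[n]$ and let $\mathcal{T}$ be its tope set. Uniformity forces every tope, being a $\preceq$-maximal covector, to have full support; closure under negation is an axiom, and the cardinality $2\Phi_{r-1}(n-1)$ is standard, computable via the topological representation as a generic pseudosphere arrangement on $S^{r-1}$ (equivalently via the face count of a generic affine hyperplane arrangement). For condition (3), uniformity supplies a unique circuit $c_Q$ up to sign on each $Q\in\binom{[n]}{r+1}$, and perpendicularity of circuits to covectors immediately gives $T|_Q\notin\{c_Q,-c_Q\}$ for every tope $T$, since on $Q$ the component-wise product $T|_Q\cdot c_Q$ is nowhere zero and must carry both signs.

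For the reverse direction, assume $\mathcal{T}$ satisfies (1)--(3). The first step is a \emph{saturation} argument. A Sauer--Shelah-type bound for negation-closed families of full-support signed vectors says that if every $(r+1)$-subset of $[n]$ carries at least one forbidden full-support pattern, then $|\mathcal{T}|\leq 2\Phi_{r-1}(n-1)$. By (1) this bound is tight, forcing every $Q\in\binom{[n]}{r+1}$ to be \emph{saturated}: the restriction set $\{T|_Q:T\in\mathcal{T}\}$ is exactly the collection of full-support signed vectors on $Q$ with $\pm c_Q$ removed. An immediate consequence is that every $r$-subset is fully sign-shattered by $\mathcal{T}$ (all $2^r$ full-support sign patterns appear), which is the combinatorial manifestation of uniformity.

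The second step is to assemble an oriented matroid from the family $\{\pm c_Q:Q\in\binom{[n]}{r+1}\}$ by declaring these to be the circuits of a candidate $\mathcal{M}$, then verify the signed circuit axioms. The non-trivial axiom is signed circuit elimination: given two circuits $c_Q,c_{Q'}$ sharing an element $e$ on which their signs disagree, produce a circuit on $(Q\cup Q')\setminus\{e\}$ with the prescribed signs on the two retained parts. This is local, reducing to the case $|Q\cup Q'|=r+2$, where saturation on that $(r+2)$-set together with condition (2) forces the three-term Grassmann--Plücker sign compatibility among the $r+2$ circuits supported inside $Q\cup Q'$. Once the circuits are verified, classical tope--covector duality returns $\mathcal{T}$ as the tope set, the cardinalities already matching by (1).

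The main obstacle is the second step, specifically the compatibility verification for circuit elimination: the saturation argument handles combinatorics on each single $(r+1)$-slice but says nothing directly about compatibility across slices, and it is this cross-slice three-term Grassmann--Plücker relation that has to be extracted from (3) alone by a careful finite case analysis on the $(r+2)$-slice. The remaining axiomatic bookkeeping reduces to symbol-pushing once elimination is in hand.
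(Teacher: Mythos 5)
The paper does not actually prove this statement: it simply attributes it to G\"artner--Welzl \cite{Gartner1994}, so there is no in-paper argument to compare your proposal against. What can be assessed is whether your sketch stands on its own, and here there are two genuine gaps.

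First, your reverse direction opens with a ``Sauer--Shelah-type bound for negation-closed families of full-support signed vectors,'' namely that if every $(r+1)$-subset carries a forbidden antipodal pair then $|\mathcal{T}|\le 2\Phi_{r-1}(n-1)$. This is sharper than the ordinary Sauer--Shelah bound $\Phi_r(n)$ (for $n=6$, $r=3$ it is $32$ vs.\ $42$), so the extra factor of antipodality is doing real work and the bound cannot simply be invoked as ``Sauer--Shelah-type'': it needs a proof (a shifting argument adapted to antipodal sign-vector families) or a citation to where it is established. Without it the saturation step has no foundation. Second, and more seriously, you explicitly defer the circuit-elimination verification to ``careful finite case analysis on the $(r+2)$-slice,'' but this is precisely where the substance of the theorem lives; the forward direction and the counting are routine, and the entire difficulty of turning a saturated VC-extremal family into an oriented matroid is concentrated in showing that the candidate circuits $\{\pm c_Q\}$ satisfy (oriented, modular) circuit elimination. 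Declaring it the ``main obstacle'' and stopping there leaves the proof incomplete. You should also note that the reduction of elimination to pairs with $|Q\cup Q'|=r+2$ rests on the equivalence of modular circuit elimination with full circuit elimination (a nontrivial fact in the oriented matroid axiomatics that deserves an explicit citation), and that even granting the reduction, the claim that the saturation condition on a single $(r+2)$-set plus negation-closure forces the required Grassmann--Pl\"ucker sign compatibility is exactly what must be demonstrated, not asserted.

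In short: your overall architecture (forward direction via circuit--covector orthogonality; reverse direction via extremality/saturation and then circuit reconstruction) is a reasonable way to approach the G\"artner--Welzl theorem, but as written it is a plan rather than a proof, with both the sharpened Sauer--Shelah inequality and the circuit-axiom verification left open.
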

	
	We further define several operations on oriented matroid for stating the \emph{strong-map conjecture} of Las Vergnas. If $\mathcal{M}_1$ is on ground set $E_1$ and $\mathcal{M}_2$ is on ground set $E_1\cup\{u\}$, they were of the same rank and their chirotope coincide on $E_1$ we say $\mathcal{M}_2$ is a \emph{single extension} of $\mathcal{M}_1$, and $\mathcal{M}_1$ is a \emph{single deletion} of $\mathcal{M}_2$ by deleting $u$, written as $\mathcal{M}_1=\mathcal{M}_2\backslash u$, or $\mathcal{M}_2\hookrightarrow \mathcal{M}_1$. An extension is a  composite of single extensions and a deletion is a composite of single deletions. If $E^{\prime}\subseteq E_1$, the \emph{restriction} of $\mathcal{M}_1$ on $E^{\prime}$ is the oriented matroid that deletes all elements not in $E^{\prime}$. \emph{Contractions} are defined as follows: if $\mathcal{M}_1$ is on ground set $E$ and $u\in E$, contraction of $u$ is defined as a oriented matroid $\mathcal{M}_2$ on $E\setminus\{u\}$ with chirotope $\chi_2(x_1,\dots,x_{r-1})=\chi_1(u,x_1,\dots,x_{r-1})$, written as $\mathcal{M}_1=\mathcal{M}_2/ u$ or $\mathcal{M}_1\twoheadrightarrow\mathcal{M}_2$. The contraction of a subset $U\subseteq E$ is composite of contracting all elements in $U$.
	
	Define there is a \emph{strong map} from $\mathcal{M}_1$ to $\mathcal{M}_2$ iff they are on same ground set and every covector of $\mathcal{M}_1$ is a covector of $\mathcal{M}_2$, in this case, we write the strong map $f:\mathcal{M}_1\rightarrow\mathcal{M}_2$ (for a general discussion see  \cite{bjorner2000oriented} \textit{pp.} 319), the corank of a strong map is defined as $r(\mathcal{M}_1)-r(\mathcal{M}_2)$. A composition of extensions and contractions on a same set of elements is always a strong map (we say such strong map \emph{factorizable} for short), the strong map conjecture asks whether the converse is true. It is known that the conjecture holds if corank is $1$, rank of $\mathcal{M}_2$ is $1$ or rank of $\mathcal{M}_1$ only one less than the size of ground set(\cite{richter1994zonotopal}, Exercise 7.30 in \cite{bjorner2000oriented}). The following proposition gives a equivalent condition for strong maps on uniform oriented matroids.
	\begin{proposition}\label{1}
		Let $\mathcal{T}_1,\,\mathcal{T}_2$ be topes of oriented matroids $\mathcal{M}_1,\,\mathcal{M}_2$, respectively. We further assume $\mathcal{M}_2$ is uniform, then there exists a strong map $\mathcal{M}_1\rightarrow\mathcal{M}_2$ iff $\mathcal{T}_2\subseteq\mathcal{T}_1$.
	\end{proposition}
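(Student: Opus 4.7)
The plan is to handle the two implications separately. The forward direction will be essentially immediate: a strong map gives $\mathcal{L}(\mathcal{M}_2) \subseteq \mathcal{L}(\mathcal{M}_1)$, so any $T \in \mathcal{T}_2$ already lies in $\mathcal{L}(\mathcal{M}_1)$, and since $T$ has full support, any $Y$ with $T \preceq Y$ must agree with $T$ on every coordinate, forcing $Y = T$; thus $T$ is $\preceq$-maximal in $\mathcal{L}(\mathcal{M}_1)$ and hence a tope of $\mathcal{M}_1$.

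The reverse direction is the substantive content. I will exploit two facts: that $\mathcal{L}(\mathcal{M}_1)$ is closed under the composition operation $\circ$, and that $\mathcal{L}(\mathcal{M}_2)$ is generated by its cocircuits under $\circ$. Thus it suffices to prove that every cocircuit $C$ of $\mathcal{M}_2$ lies in $\mathcal{L}(\mathcal{M}_1)$. The critical input from uniformity of $\mathcal{M}_2$ is the lemma that $|C^0| = r_2 - 1$ and every full-support extension of $C$ is a tope of $\mathcal{M}_2$; via the Folkman--Lawrence topological representation, the star of $C$ in the big face lattice is a generic central arrangement of $r_2 - 1$ pseudohyperplanes through a point in $\mathbb{R}^{r_2 - 1}$, whose $2^{r_2 - 1}$ chambers realize every sign pattern on $C^0$. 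The hypothesis $\mathcal{T}_2 \subseteq \mathcal{T}_1$ then places all these extensions in $\mathcal{T}_1$.

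To convert ``every full-support extension of $C$ is a tope of $\mathcal{M}_1$'' into ``$C \in \mathcal{L}(\mathcal{M}_1)$'' I will argue by contradiction via perpendicularity with circuits. If $C \not\perp C'$ for some circuit $C'$ of $\mathcal{M}_1$, then after possibly replacing $C'$ by $-C'$, the signed vectors $C$ and $C'$ agree wherever both are nonzero; I can then manufacture a full-support extension $T$ of $C$ by setting $T(e) = C'(e)$ for each $e \in C^0 \cap \underline{C'}$ and arbitrary signs elsewhere. This $T$ satisfies $T|_{\underline{C'}} = C'|_{\underline{C'}}$, so $T \not\perp C'$ and $T$ cannot be a tope of $\mathcal{M}_1$, a contradiction. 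The main obstacle is precisely the uniformity-based lemma that full-support extensions of cocircuits are topes of $\mathcal{M}_2$, since the corresponding claim fails for general covectors (for instance $\bm{0}$ admits every signed vector as an extension but only a small fraction are topes), so the reduction truly hinges on the special local structure of a cocircuit.
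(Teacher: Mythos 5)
Your proof is correct, but it follows a genuinely different route from the paper's. Both arguments share the opening move---use uniformity of $\mathcal{M}_2$ to pack a covector's full-support extensions inside $\mathcal{T}_2\subseteq\mathcal{T}_1$---but they diverge on how to descend from those topes of $\mathcal{M}_1$ back to the desired covector. The paper stays entirely on the covector side: from the set $\mathcal{P}(X)$ of all full-support extensions of a covector $X$ of $\mathcal{M}_2$, it applies covector elimination iteratively (zeroing out one coordinate at a time whenever two covectors differ in a single position) to reach $X$ itself. You instead pass through duality: you first reduce to cocircuits via closure of $\mathcal{L}(\mathcal{M}_1)$ under $\circ$, then show each cocircuit $C$ of $\mathcal{M}_2$ is orthogonal to every circuit $C'$ of $\mathcal{M}_1$ (by manufacturing, from any non-orthogonal pair, a full-support extension $T\succeq C$ with $T\not\perp C'$, which would contradict $T$ being a tope), and finally invoke $\mathcal{L}=\mathcal{V}^\perp$. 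Both are valid; the paper's elimination argument is arguably more self-contained, while your orthogonality argument is cleaner once one grants the standard characterization of covectors as the signed vectors perpendicular to all circuits.

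Two small remarks. First, your invocation of the Folkman--Lawrence representation to show that every full-support extension of a cocircuit of a uniform oriented matroid is a tope is overkill: a purely combinatorial counting argument suffices (a nonzero covector of a rank-$r$ uniform oriented matroid has support of size at least $n-r+1$, every circuit has size $r+1$, so their supports intersect; hence if $X\perp C'$ with a genuine sign conflict, so does any $X'\succeq X$). Second, your closing claim that ``the corresponding claim fails for general covectors'' and hence ``the reduction truly hinges on the special local structure of a cocircuit'' is slightly overstated: the full-support-extension lemma does hold for every \emph{nonzero} covector of a uniform oriented matroid, which is exactly what the paper exploits. The only exceptional covector is $\bm{0}$, and $\bm{0}\in\mathcal{L}(\mathcal{M}_1)$ trivially, so the paper's direct treatment of arbitrary covectors is also sound; your reduction to cocircuits is sufficient but not forced.
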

	
	\begin{proof}
		The "only if" part is trivial since every tope is a covector, we will prove the "if" part. Let $\mathcal{C}_1,\,\mathcal{C}_2$ be covectors of oriented matroids $\mathcal{M}_1,\,\mathcal{M}_2$, respectively. Suppose $X\in \mathcal{C}_2$, then since $\mathcal{M}_2$ is uniform, any $X^\prime\succeq X$ is a covector of $\mathcal{M}_2$. Let $\mathcal{P}(X)=\{X^\prime:X^\prime\succeq X,\, X^\prime\text{ has full support}\}$, we have $\mathcal{P}(X)\subseteq\mathcal{T}_2\subseteq\mathcal{T}_1\subseteq\mathcal{C}_1$.
		
		Then we prove $X$ is a covector of $\mathcal{M}_1$. Observe that if $X_1$, $X_2$ are two covectors that only differ in one index (i.e. there exists $i\in E$ s.t. $X_1(e)=X_2(e)$ for $e\not=i$ and $X_1(i)=-X_2(i)$), then by covector elimination $X^\prime$ with $X^\prime(e)=X_2(e)$ for $e\not=i$ and $X^\prime(i)=0$ is a covector, applying this property on $\mathcal{P}(X)$ iteratively we have $X\in \mathcal{C}_1$. Thus $\mathcal{M}_1\rightarrow\mathcal{M}_2$.
	\end{proof}
	\subsection{Alternating oriented matroid}
	Alternating oriented matroid is an important family of oriented matroids with many nice properties. An alternating oriented matroid is an oriented matroid on $[n]$ with rank $r$, with chirotope: $\chi(e_1,\dots,e_r)=1$ if $1\le e_1\le\dots\le e_r\le n$. The main fact we need is:  topes of a rank $k$ alternating oriented matroid are all signed vectors with at most $k-1$ sign changes. For example, if $k=4$, topes are signed vectors with the form $(+\dots+-\dots-)$, $(+\dots+-\dots-+\dots+)$ or $(+\dots+-\dots-+\dots+-\dots-)$ or their opposite. Alternating oriented matroids are always realizable by momentum curve $t\mapsto (t,\dots,t^{r-1}),\,t\in [n]$.
	\section{Construction and Verification of Counterexample}
	For simplicity we will always consider those signed vectors with first non-zero component positive from now on, because we can identify oppositely signed vectors. The counterexample is following strong map $f:\mathcal{M}_1\rightarrow\mathcal{M}_2$. $\mathcal{M}_1$ is a rank $4$ alternating oriented matroid on ground set $E=[n]:=\{1,\dots,n\}$ with $n$ even. $\mathcal{M}_2$ is a rank $2$ oriented matroid defined as follows: let $\sigma$ be the permutation $(1\, 2)(3\, 4)\dots(n-1\,n)$, chirotope of $\mathcal{M}_2$ is defined as $\chi(i,j)=1$ iff $\sigma(i)\ge\sigma(j)$. Topes of $\mathcal{M}_2$ were all in forms of $(+\dots+-\dots-)$ or $(+\dots+-+-\dots-)$. Thus by Proposition \ref{1}, $f:\mathcal{M}_1\rightarrow\mathcal{M}_2$ is a strong map indeed.
	
	We first give an intuitive (and invalid) explanation of why $f$ is not factorizable when $n$ is big enough. Note that $\mathcal{M}_1$ can be realized by moment curve $t\in [n],\,h:t\mapsto(t,t^2,t^3)$, we could extend it for $t\in \mathbb{R}$. And if $f$ is a factorizable strong map, it can be realized as a linear projective transformation, let the transformation be $g$, then $g(h(t))$ is a rational function in form of $p_1(t)/p_2(t)$, in which $p_1,p_2$ are polynomials at most cubic. So the number of $t$ with $(g\circ h)^{\prime}(t)=0$ is at most $4$ and the number of poles is at most $3$. So we could realize $\mathcal{M}_2$ as $[n]\rightarrow \mathbb{R}$, which is the restriction of $g\circ h$ on $[n]$, then for every $i=2,\dots,n-1$, either $g(h(i))<g(h(i+1)),\,g(h(i))<g(h(i-1))$ or $g(h(i))>g(h(i+1)),\,g(h(i))>g(h(i-1))$ holds, which means there is a critical point or pole of $g\circ h$ near $i$, but the number of points satisfying such condition is at least $n-2$, which leads to a contradiction.
	
	This argument is not valid due to two reasons. Firstly the realization does not necessarily be the moment curve, secondly the extension may be not realizable. We will give a strict proof that when $n=8$, $f$ is not factorizable. 
	
	Observe that if $f$ is a factorizable strong map, then since deletions and contractions commute there exists an oriented matroid $\mathcal{M}^{\prime}$ of rank $3$ such that $f_1\circ f_2$ where $f_1:\mathcal{M}_1\rightarrow\mathcal{M}^{\prime}$, $f_2:\mathcal{M}^{\prime}\rightarrow\mathcal{M}_2$ are both strong maps. We can further assume that $\mathcal{M}^{\prime}$ is uniform by perturbing the extension element (Proposition 7.2.2(2) in \cite{bjorner2000oriented}).
	
	To show that $\mathcal{M}^{\prime}$ do not exist, we start from considering the case $n=6$:
	
	\begin{lemma} \label{6}
		Let $\mathcal{M}_1,\,\mathcal{M}_2$ be two oriented matroids defined above for the case of $n=6$. If $\mathcal{M}^{\prime}$ is an uniform oriented matroid of rank $3$ such that $f_1:\mathcal{M}_1\rightarrow\mathcal{M}^{\prime}$, $f_2:\mathcal{M}^{\prime}\rightarrow\mathcal{M}_2$ are both strong maps. Then $(+-+-00),\,(+-00-+)$ are circuits of $\mathcal{M}^{\prime}$. 
	\end{lemma}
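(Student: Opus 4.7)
The plan is to translate the hypothesis, via Proposition~\ref{1}, into an inclusion of tope sets and then read off the forbidden sign patterns on the two target $4$-subsets using the VC-dimension characterization (Theorem~\ref{vc}). Since $\mathcal{M}_1,\mathcal{M}',\mathcal{M}_2$ are all uniform, Proposition~\ref{1} gives $\mathcal{T}_2\subseteq\mathcal{T}'\subseteq\mathcal{T}_1$ on the tope sets. Because $\sigma$ sorts the ground set into the linear order $2,1,4,3,6,5$, the six topes of $\mathcal{M}_2$ are the cuts of this order: $(++++++), (+-++++), (++----), (++-+--), (++++--), (++++-+)$.

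Restricting these six topes to each of $Q_1=\{1,2,3,4\}$ and $Q_2=\{1,2,5,6\}$ yields in both cases the same four attained sign patterns (mod overall sign), namely $(++++), (+-++), (++--), (++-+)$. By Theorem~\ref{vc}, the circuit of $\mathcal{M}'$ supported on a $4$-subset $Q$ equals, up to sign, the unique pattern on $Q$ not realized by any $T|_Q$ with $T\in\mathcal{T}'$. Since $\mathcal{T}_2\subseteq\mathcal{T}'$, the circuit on $Q_1$ (resp.\ on $Q_2$) must therefore lie in the complementary set of four candidates $\{(+++-), (+-+-), (+--+), (+---)\}$.

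To pin down $(+-+-)$ on $Q_1$ and $(+--+)$ on $Q_2$, I use the upper inclusion $\mathcal{T}'\subseteq\mathcal{T}_1$ together with the alternating structure of $\mathcal{M}_1$, whose topes have at most $r(\mathcal{M}_1)-1=3$ sign changes. For each candidate pattern, the topes of $\mathcal{M}_1$ restricting to it on $Q_1$ (or $Q_2$) form a short explicit list; for instance, $(+-+---)$ is the \emph{only} tope of $\mathcal{M}_1$ whose restriction to $Q_1$ equals $(+-+-)$. I eliminate each non-target candidate by contradiction, using the dual of the covector inclusion: a strong map $\mathcal{M}_1\to\mathcal{M}'$ forces every vector of $\mathcal{M}_1$ to be a vector of $\mathcal{M}'$. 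In particular, the six size-$5$ alternating circuits $C_i$ of $\mathcal{M}_1$ (one for each $[6]\setminus\{i\}$, e.g.\ $C_6=(+-+-+0)$) are vectors of $\mathcal{M}'$ and hence decompose as conformal compositions of $4$-subset circuits of $\mathcal{M}'$. Assuming a non-target $c_Q$ fixes four of the twenty chirotope values $\chi'(i,j,k)$; the conformal decomposition then forces further $\chi'$-values; and finally the 3-term Grassmann-Pl\"ucker sign relation on the corresponding $5$-subset of $[6]$ is violated.

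The main obstacle is the case analysis in this last step. For $Q_1$ one tests the three non-target candidates against the Pl\"ucker relations on the $5$-subsets $\{1,2,3,4,5\}$ and $\{1,2,3,4,6\}$ using the conformal decompositions of the $\mathcal{M}_1$-circuits $(+-+-+0)$ and $(+-+-0+)$; for $Q_2$ one uses the relations on $\{1,2,3,5,6\}$ and $\{1,2,4,5,6\}$ with the circuits $(+-+0-+)$ and $(+-0+-+)$. The resulting chirotope sign bookkeeping is finite and mechanical, but care is needed to propagate signs correctly through both the conformal decompositions and the Pl\"ucker relations.
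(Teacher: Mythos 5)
Your reduction to tope-set inclusions is correct and matches the framing the paper uses: by Proposition~\ref{1} (applied to the two strong maps, each into a uniform matroid) one indeed gets $\mathcal{T}_2\subseteq\mathcal{T}'\subseteq\mathcal{T}_1$, and your enumeration of the six topes of $\mathcal{M}_2$ and of their restrictions to $Q_1=\{1,2,3,4\}$ and $Q_2=\{1,2,5,6\}$ is right. So far so good: on each of $Q_1,Q_2$ the circuit of $\mathcal{M}'$ must lie among the four sign patterns not attained by $\mathcal{T}_2$, which is a genuine structural observation that the paper does not make explicit (the paper instead runs a brute-force search over all $\binom{20}{10}$ candidate $16$-element tope sets sandwiched between $\mathcal{T}_2$ and $\mathcal{T}_1$, filtered by the VC condition of Theorem~\ref{vc}, and reads the circuits off the surviving $20$ cases).

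The gap is in your step that eliminates the three non-target candidates. Your plan is to use only the two $\mathcal{M}_1$-circuits $(+-+-+0)$, $(+-+-0+)$ as vectors of $\mathcal{M}'$ together with the 3-term Grassmann--Pl\"ucker relations on $\{1,2,3,4,5\}$ and $\{1,2,3,4,6\}$. I tried this for the candidate circuit $(+++-)$ on $Q_1$, which fixes $\chi'(123)=+$, $\chi'(124)=+$, $\chi'(134)=-$, $\chi'(234)=+$. Requiring $(+-+-+0)\perp D$ for every cocircuit $D$ of $\mathcal{M}'$ supported in $\{1,\dots,5\}$ forces $\chi'(135)=\chi'(145)=-$ but otherwise leaves $\chi'(125),\chi'(235),\chi'(245),\chi'(345)$ constrained only by disjunctions, and every $3$-term GP relation on $\{1,2,3,4,5\}$ can be satisfied under those partial assignments; the same happens on $\{1,2,3,4,6\}$ using $(+-+-0+)$. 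So the stated constraints and Pl\"ucker relations do not by themselves rule this candidate out. To make the argument close you would at minimum have to couple the $Q_1$ and $Q_2$ constraints together (and likely also use the remaining size-$5$ circuits of $\mathcal{M}_1$ and further GP relations on other $5$-subsets, or full-support vectors of $\mathcal{M}_1$), and it is not clear without doing the full bookkeeping that a contradiction must emerge for all three unwanted candidates on each of $Q_1,Q_2$. In short: your steps 1--3 are correct and cleaner than the paper's enumeration, but step 4 as written is a sketch whose specific verification plan fails on the first case one tests, so as it stands the proof has a real gap. The paper avoids this issue by exhaustively enumerating the admissible tope sets $\mathcal{T}'$ and checking directly that the tope $(+-+---)$ (the unique $\mathcal{T}_1$-tope not perpendicular to $(+-+-00)$) and its $Q_2$-analogue are excluded in every surviving case.
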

	
	The can be done by a brute force search of possible $\mathcal{T}(\mathcal{M}^{\prime})$, which should satisfy $\#\mathcal{T}(\mathcal{M}^{\prime})=16$ and $\mathcal{T}(\mathcal{M}_2)\subset\mathcal{T}(\mathcal{M}^{\prime})\subset\mathcal{T}(\mathcal{M}_1)$, note that $\#\mathcal{T}(\mathcal{M}_1)=26$ and $\#\mathcal{T}(\mathcal{M}_2)=6$, so there are $\binom{26-6}{16-6}=184,756$ cases to check. And another constraint is the VC-dimensional property defined in Theorem. \ref{vc}: for every $Q\in \binom{[6]}{4}$, there exist a signed vector $c_Q$ supported on $Q$ such that $c_Q\perp T$ for every $T\in \mathcal{T}(\mathcal{M}^{\prime})$(See \textit{Supplementary File}, or \texttt{https://github.com/PeterWu-Biomath/OM-Stong-Map} for an implementation of the proposed algorithm and explanation of code). There are $20$ sets of signed vectors satisfy this condition. For all those possibilities, $(+-+--),(+----+)\not \in \mathcal{T}(\mathcal{M}^{\prime})$ always holds, note that $(+-+--)$ is the only signed vector in $\mathcal{T}(\mathcal{M}_1)$ perpendicular to $(+-+-00)$, hence $\forall T\in \mathcal{T}(\mathcal{M}^{\prime})$, $T\perp (+-+-00)$. Thus $(+-+-00)$ is a circuit of $\mathcal{M}^{\prime}$. Following same argument, $(+-00-+)$ is a circuit of $\mathcal{M}^{\prime}$.
	
	Finally, the nonfactorzability of case $n=8$ follows immediately from Lemma \ref{6}. Restricting on $\{1,2,3,4,5,6\}$ we know the circuit of $\mathcal{M}^{\prime}$ supported on $\{1,2,5,6\}$ is $(+-00-+00)$, however by restricting on $\{1,2,5,6,7,8\}$ the circuit on $\{1,2,5,6\}$ should be $(+-00+-00)$, which leads to a contradiction.
	
	\section*{acknowledgements}
		I'm very grateful to Andreas Dress, David Bryant, Jack Koolen, Luis Goddyn, Paul Tupper and Stefan Gr\"unewald for useful remarks. And many thanks go in particular to Luis Goddyn for his suggestions on manuscripts.

	% BibTeX users please use one of
	%\bibliographystyle{spbasic}      % basic style, author-year citations
	\bibliographystyle{spmpsci}      % mathematics and physical sciences
	%\bibliographystyle{spphys}       % APS-like style for physics
	%\bibliography{strong_map}   % name your BibTeX data base

\begin{thebibliography}{10}
		\providecommand{\url}[1]{{#1}}
		\providecommand{\urlprefix}{URL }
		\expandafter\ifx\csname urlstyle\endcsname\relax
		\providecommand{\doi}[1]{DOI~\discretionary{}{}{}#1}\else
		\providecommand{\doi}{DOI~\discretionary{}{}{}\begingroup
			\urlstyle{rm}\Url}\fi
		
		\bibitem{allys1991minors}
		Allys, L., {Las Vergnas}, M.: {Minors of matroid morphisms}.
		\newblock J. Combinatorial Theory, Ser. B  (1991)
		
		\bibitem{bjorner2000oriented}
		Bj{\"{o}}rner, A., {Las Vergnas}, M., Sturmfels, B., White, N., Ziegler, G.M.:
		{Oriented Matroids}.
		\newblock Oriented Matroids, by Anders Bj{\{}{\"{o}}{\}}rner and Michel Las
		Vergnas and Bernd Sturmfels and Neil White and Gunter M. Ziegler, pp. 560.
		ISBN 052177750X. Cambridge, UK: Cambridge University Press, January 2000.
		\textbf{1}(46) (2000)
		
		\bibitem{Bott1982}
		Bott, R., Tu, L.W.: {Differential Forms in Algebraic Topology}, vol.~82 (1982).
		\newblock \doi{10.1007/978-1-4757-3951-0}.
		\newblock
		
		\bibitem{Gartner1994}
		G{\"{a}}rtner, B., Welzl, E.: {Vapnik-Chervonenkis dimension and
			(pseudo-)hyperplane arrangements}.
		\newblock Discrete {\&} Computational Geometry \textbf{12}(1), 399--432 (1994).
		\newblock \doi{10.1007/BF02574389}.
		\newblock \urlprefix\url{http://link.springer.com/10.1007/BF02574389}
		
		\bibitem{Gelfand1992}
		Gelfand, I.M., Macpherson, R.D.: {A combinatorial formula for the Pontrjagin
			classes} (1992).
		\newblock \doi{10.1090/S0273-0979-1992-00282-3}
		
		\bibitem{higgs1968strong}
		Higgs, D.A.: {Strong maps of geometries}.
		\newblock Journal of Combinatorial Theory \textbf{5}(2), 185--191 (1968)
		
		\bibitem{Liu2016}
		Liu, G.: {A counterexample to the extension space conjecture for realizable
			oriented matroids}  (2016).
		\newblock \urlprefix\url{https://arxiv.org/pdf/1606.05033.pdf}
		
		\bibitem{macpherson1991combinatorial}
		MacPherson, R.D.: {Combinatorial differential manifolds}.
		\newblock In: Topological Methods in Modern Mathematics,” Proc. of a
		Symposium in Honor of John Milnor's Sixtieth Birthday, SUNY Stony Brook, pp.
		203--221 (1991)
		
		\bibitem{Milnor2004}
		Milnor: {Characteristic Classes}.
		\newblock Mathematische Zeitschrift \textbf{249}(3), 493--511 (2004).
		\newblock \doi{10.1007/s00209-004-0712-6}.
		\newblock \urlprefix\url{http://link.springer.com/10.1007/s00209-004-0712-6}
		
		\bibitem{Mnev1993}
		Mn{\"{e}}v, N.E., Ziegler, G.M.: {Combinatorial models for the
			finite-dimensional Grassmannians}.
		\newblock Discrete {\&} Computational Geometry \textbf{10}(1), 241--250 (1993).
		\newblock \doi{10.1007/BF02573979}
		
		\bibitem{Richter-Gebert1993}
		Richter-Gebert, J.: {Oriented matroids with few mutations}.
		\newblock Discrete {\&} Computational Geometry \textbf{10}(3), 251--269 (1993).
		\newblock \doi{10.1007/BF02573980}.
		\newblock \urlprefix\url{http://link.springer.com/10.1007/BF02573980}
		
		\bibitem{richter1994zonotopal}
		Richter-Gebert, J., Ziegler, G.M.: Zonotopal tilings and the bohne-dress
		theorem.
		\newblock Contemporary Mathematics \textbf{178}, 211--211 (1994)
		
	\end{thebibliography}

\end{document}